\documentclass[11pt]{article}

\usepackage[margin=1in]{geometry}
\usepackage{amsmath,amssymb,amsthm}
\usepackage{booktabs}
\usepackage{hyperref}

\newtheorem{theorem}{Theorem}[section]
\newtheorem{proposition}[theorem]{Proposition}
\newtheorem{lemma}[theorem]{Lemma}
\newtheorem{corollary}[theorem]{Corollary}
\theoremstyle{remark}
\newtheorem{remark}[theorem]{Remark}

\newcommand{\area}{\Delta}
\newcommand{\arsinh}{\operatorname{arsinh}}

\title{\textbf{Average Chord Lengths in a Triangle}}
\author{Stanley Rabinowitz\\
545 Elm St Unit 1, Milford, New Hampshire 03055, USA\\
\texttt{stan.rabinowitz@comcast.net}}
\date{}

\begin{document}

\maketitle

\begin{abstract}
Let $P$ be a point inside a triangle $T$.  We consider the average
length of the chords of $T$ through $P$, where the direction of the
chord is chosen uniformly.  An elementary formula is obtained in terms
of the distances from $P$ to the sides and vertices of the triangle.
Several classical triangle centers give especially simple
specializations.  For example, if $I$ is the incenter, then
\[
 M_T(I)=\frac{2r}{\pi}
 \log\left(\cot\frac A4\cot\frac B4\cot\frac C4\right).
\]
Our main result is the sharp inequality
\[
 M_T(P)\le
 \frac{p}{\pi\sqrt3}\log(2+\sqrt3),
\]
valid simultaneously for every triangle of perimeter $p$ and every
interior point $P$.  Thus, among all such pairs $(T,P)$, the largest
possible average chord length occurs only when $T$ is equilateral
and $P$ is its center.  The proof is an elementary symmetrization
argument.  We close
with brief remarks relating the problem to the radial center of a
convex body, the electrostatic potential center of a triangle, and
dual quermassintegrals.
\end{abstract}

\medskip
\noindent\textbf{Keywords.} Average chord length; triangle center; incenter;
centroid; Steiner symmetrization; radial center; dual quermassintegral.

\smallskip
\noindent\textbf{2020 Mathematics Subject Classification.}
51M04, 52A40.

\section{Introduction}

Fix a point $P$ inside a triangle $T$.  Through $P$ draw a line making
an angle $\theta$ with a fixed direction, and let $\ell_P(\theta)$ be
the length of the chord cut from $T$ by this line.  Since an unoriented
line has period $\pi$, it is natural to define the average chord
length through $P$ by
\[
 M_T(P)=\frac1\pi\int_0^\pi \ell_P(\theta)\,d\theta.
\]
The question is elementary to state: how does $M_T(P)$ depend on the
triangle and on the location of $P$?  This precise average-chord
question for a point in a planar convex region was also posed by
Nandakumar R.\ on MathOverflow in 2021 \cite{Nandakumar}; numerical
experiments there indicated that, for a triangle, the maximizing
point need not be either the centroid or the incenter.

The corresponding integral has appeared in several other settings.
In electrostatics it is the Coulomb potential in three-dimensional
space of a uniformly charged triangular lamina, evaluated at a point
of the lamina:
\[
 \int_T\frac{dA(X)}{|X-P|}.
\]
Explicit potential formulas for polygons, and in particular for
triangles, have long been known; see Duffin and McWhirter
\cite{DuffinMcWhirter} and, for recent formulas emphasizing the
centroid and incenter of a triangle, B\"ohm and Runge
\cite{BohmRunge}.  Our purpose here is different.  We keep the
geometric interpretation in terms of average chords, derive the
needed formulas directly by elementary trigonometry and integration,
and study their consequences for familiar triangle centers.

A first example already gives a pleasantly compact answer.

\section{A motivating example: the incenter}

Let $I$ be the incenter of $ABC$, and let $r$ be the inradius.

\begin{proposition}
The average chord length through the incenter is
\[
 {
 M_T(I)=\frac{2r}{\pi}
 \log\left(\cot\frac A4\cot\frac B4\cot\frac C4\right).
 }
\]
\end{proposition}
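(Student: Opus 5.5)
Write the chord through $P$ in direction $\theta$ as the sum of the two half-chords, $\ell_P(\theta)=\rho(\theta)+\rho(\theta+\pi)$, where $\rho(\phi)$ is the distance from $P$ to the boundary in direction $\phi$. Then
\[
 M_T(P)=\frac1\pi\int_0^{2\pi}\rho(\phi)\,d\phi ,
\]
so the problem reduces to integrating the radial function of $T$ about $P$ over a full turn. Split the full turn into the three angular sectors subtended at $P$ by the sides $BC$, $CA$, $AB$.

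For a side at perpendicular distance $d$ from $P$, measure the angle $\psi$ from the foot of the perpendicular; along that side $\rho=d\sec\psi$. A sector running from $\psi=-\alpha$ to $\psi=\beta$ therefore contributes
\[
 d\int_{-\alpha}^{\beta}\sec\psi\,d\psi=d\left(\operatorname{gd}^{-1}\alpha+\operatorname{gd}^{-1}\beta\right),
 \qquad \operatorname{gd}^{-1}x=\log\tan\left(\frac\pi4+\frac x2\right),
\]
where $\operatorname{gd}^{-1}$ is the inverse Gudermannian. (This is the general elementary formula in terms of distances to the sides and vertices, since $\tan\alpha$ and $\sec\alpha$ can be read off from the vertex distances.)

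Now specialize to $P=I$, where every $d=r$. The perpendicular from $I$ to a side lands at the tangency point, and the segment from $I$ to vertex $A$ bisects angle $A$. In the right triangle formed by $I$, the tangency point and $A$, the angle at $I$ is therefore $\pi/2-A/2$. Each vertex $A$ borders two sectors, once from each adjacent side, and each time it contributes the half-angle $\pi/2-A/2$. Summing,
\[
 \int_0^{2\pi}\rho\,d\phi=2r\sum_{A,B,C}\log\tan\left(\frac\pi4+\frac{\pi/2-A/2}{2}\right)=2r\sum\log\tan\left(\frac\pi2-\frac A4\right)=2r\sum\log\cot\frac A4 .
\]
Dividing by $\pi$ gives the stated formula.

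The only step needing care is the bookkeeping: we must check that each vertex contributes twice, so that the factor is $2r$, and we must confirm that the angles $\psi$ stay in $(-\pi/2,\pi/2)$. The latter holds automatically because the tangency points lie strictly inside the sides. As a sanity check for the equilateral case, we expect $M=\frac{6r}{\pi}\log\cot 15^\circ=\frac{6r}{\pi}\log(2+\sqrt3)$, which agrees with the main inequality quoted in the abstract since $p=6\sqrt3\,r$.
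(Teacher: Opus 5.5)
Your proof is correct and follows essentially the same route as the paper: you reduce to the radial integral, split the full turn into the three sectors subtended by the sides, integrate $r\sec\psi$ over each, and use that $IA$, $IB$, $IC$ bisect the angles, so each vertex contributes $r\log\cot(A/4)$ twice. The only cosmetic difference is that you parametrize each sector endpoint by the angle $\pi/2-A/2$ at $I$ and pass through $\log\tan(\pi/4+x/2)$, whereas the paper reads $\log\cot(A/4)$ off directly from the half-angle $A/2$ at the vertex.
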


\begin{proof}
Consider the rays from $I$ which meet the side $BC$.  The perpendicular
distance from $I$ to $BC$ is $r$.  If a ray makes angle $\phi$ with
the perpendicular to $BC$, then its length from $I$ to $BC$ is
$r\sec\phi$.

At $B$ and $C$ the angles of triangle $IBC$ are $B/2$ and $C/2$.
Consequently the total contribution of the rays meeting $BC$ to the
integral of the radial distance is
\[
 r\left(\log\cot\frac B4+\log\cot\frac C4\right).
\]
Adding the analogous contributions from the other two sides, each of
the three logarithms occurs twice.  Dividing by $\pi$ gives the
formula.
\end{proof}

For an equilateral triangle of side $a$ this becomes
\[
 M_T(I)=
 \frac{\sqrt3\,a}{\pi}\log(2+\sqrt3).
\]
The incenter and centroid coincide in this case.

For a $3$--$4$--$5$ triangle, $s=6$ and $r=1$, so the same formula
may be written
\[
 M_T(I)
 =
 \frac{2}{\pi}
 \left(\arsinh 3+\arsinh 2+\arsinh 1\right)
 \approx 2.63781.
\]
Thus even for a very familiar triangle the average chord length is
not an especially obvious classical length.

An equivalent formula for the associated potential integral at the
incenter can be extracted from the formulas of B\"ohm and Runge
\cite{BohmRunge}.  The derivation above is included because it shows
directly why the quarter-angles occur.

\section{A formula for an arbitrary interior point}

Let $\rho_P(\theta)$ denote the distance from $P$ to the boundary of
$T$ along the ray in direction $\theta$.

\begin{lemma}\label{lem:radial}
For every interior point $P$,
\[
 {
 M_T(P)=\frac1\pi\int_0^{2\pi}\rho_P(\theta)\,d\theta.
 }
\]
Moreover,
\[
 {
 \pi M_T(P)=
 \int_T\frac{dA(X)}{|X-P|}.
 }
\]
\end{lemma}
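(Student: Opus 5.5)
We prove the two identities separately; both are elementary once convexity of $T$ is used.

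For the first identity, we use that $T$ is convex and $P$ is interior. Then the line through $P$ in direction $\theta$ meets $\partial T$ in exactly two points, one on each side of $P$. So the chord splits at $P$ into the ray in direction $\theta$ and the ray in direction $\theta+\pi$, and
\[
 \ell_P(\theta)=\rho_P(\theta)+\rho_P(\theta+\pi),\qquad 0\le\theta<\pi .
\]
We integrate over $[0,\pi]$ and substitute $\theta\mapsto\theta-\pi$ in the second term. This turns $\int_0^\pi\rho_P(\theta+\pi)\,d\theta$ into $\int_\pi^{2\pi}\rho_P(\theta)\,d\theta$. Adding the two pieces gives
\[
 \int_0^\pi\ell_P(\theta)\,d\theta=\int_0^{2\pi}\rho_P(\theta)\,d\theta .
\]
Dividing by $\pi$ yields the first formula. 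The only point to check is measurability and integrability, but $\rho_P$ is continuous and bounded on $[0,2\pi]$: it is piecewise of the form $d\sec(\theta-\theta_0)$ on each of three arcs, as in the proof of the incenter proposition.

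For the second identity, we introduce polar coordinates $X=P+t(\cos\theta,\sin\theta)$ centered at $P$, with $dA=t\,dt\,d\theta$. By convexity (star-shapedness about $P$ suffices), $T$ is exactly the set
\[
 \{\,0\le t<\rho_P(\theta),\ 0\le\theta<2\pi\,\}
\]
up to a null set. The integrand is $|X-P|^{-1}=1/t$, so the Jacobian factor $t$ cancels it. Hence
\[
 \int_T\frac{dA(X)}{|X-P|}=\int_0^{2\pi}\int_0^{\rho_P(\theta)}dt\,d\theta=\int_0^{2\pi}\rho_P(\theta)\,d\theta=\pi M_T(P),
\]
where the last step uses the first part.

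The main (mild) obstacle is justifying the change of variables for an integrand with a singularity at $P$. Since the integrand is nonnegative, we handle this with Tonelli's theorem. Concretely, we apply the polar change of variables on $T$ minus a small disk of radius $\varepsilon$ about $P$. That region corresponds to $\varepsilon\le t<\rho_P(\theta)$, and its integral equals $\int_0^{2\pi}(\rho_P(\theta)-\varepsilon)\,d\theta$. Letting $\varepsilon\to0$ by monotone convergence gives the result and shows that the improper integral is finite.
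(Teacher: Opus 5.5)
Your proposal is correct and matches the paper's proof. You split each chord at $P$ into two radial segments, which turns the $[0,\pi]$ average into $\int_0^{2\pi}\rho_P\,d\theta$, and then use polar coordinates about $P$, where the Jacobian $t$ cancels the kernel $1/t$. Your extra justifications, the continuity of $\rho_P$ and the $\varepsilon$-disk excision with monotone convergence at the singularity, are refinements the paper leaves implicit.
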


\begin{proof}
The chord through $P$ in direction $\theta$ has length
\[
 \ell_P(\theta)=\rho_P(\theta)+\rho_P(\theta+\pi).
\]
Therefore
\[
 \int_0^\pi\ell_P(\theta)\,d\theta
 =
 \int_0^{2\pi}\rho_P(\theta)\,d\theta.
\]
For the second identity, use polar coordinates centered at $P$:
\[
 \int_T\frac{dA(X)}{|X-P|}
 =
 \int_0^{2\pi}\int_0^{\rho_P(\theta)}
 \frac1r\,r\,dr\,d\theta
 =
 \int_0^{2\pi}\rho_P(\theta)\,d\theta.
\]
\end{proof}

Write
\[
 d_a=\operatorname{dist}(P,BC),\qquad
 d_b=\operatorname{dist}(P,CA),\qquad
 d_c=\operatorname{dist}(P,AB).
\]

\begin{theorem}\label{thm:general}
For every interior point $P$,
\[
 {
 M_T(P)=\frac1\pi\sum_{\rm cyc}
 d_a\log\frac{PB+PC+a}{PB+PC-a}.
 }
\]
Equivalently,
\[
 M_T(P)=\frac{2}{\pi}\sum_{\rm cyc}
 d_a\,\operatorname{artanh}\frac{a}{PB+PC}.
\]
\end{theorem}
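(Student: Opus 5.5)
By Lemma~\ref{lem:radial}, $\pi M_T(P)=\int_0^{2\pi}\rho_P(\theta)\,d\theta$. The plan is to split this integral according to which side of $T$ the ray from $P$ hits, exactly as in the incenter computation. The rays hitting $BC$ sweep out the angle $\angle BPC$. We compute that contribution in terms of $d_a$ and the two base angles of triangle $PBC$, convert the result into side lengths, and then sum cyclically.

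For the side $BC$, let $H$ be the foot of the perpendicular from $P$ to $BC$. A ray making signed angle $\phi$ with $PH$ has length $d_a\sec\phi$. The angles $\angle BPH$ and $\angle HPC$ equal $\frac\pi2-\beta$ and $\frac\pi2-\gamma$, where $\beta=\angle PBC$ and $\gamma=\angle PCB$. Both base angles are acute-or-not, so $H$ may lie outside segment $BC$ and the $\phi$-range is then one-sided. The signed version still works, since $\phi$ runs from $-(\frac\pi2-\beta)$ to $\frac\pi2-\gamma$ in all cases.

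Using $\int\sec\phi\,d\phi=\log(\sec\phi+\tan\phi)$ and $\sec(\frac\pi2-x)+\tan(\frac\pi2-x)=\cot\frac x2$, the contribution is
\[
 d_a\left(\log\cot\frac\beta2+\log\cot\frac\gamma2\right)
 = d_a\log\left(\cot\frac\beta2\cot\frac\gamma2\right).
\]
This matches the incenter case, where $\beta=B/2$ and $\gamma=C/2$.

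The main step is the identity
\[
 \cot\frac\beta2\cot\frac\gamma2=\frac{PB+PC+a}{PB+PC-a}
\]
in triangle $PBC$. I would prove it with the half-angle formulas. Let $s'=\frac12(a+PB+PC)$ be the semiperimeter of $PBC$ and $r'$ its inradius. Then $\cot\frac\beta2=(s'-PC)/r'$ and $\cot\frac\gamma2=(s'-PB)/r'$, so the product is $(s'-PB)(s'-PC)/r'^2$. Heron's formula gives $r'^2=(s'-a)(s'-PB)(s'-PC)/s'$. Hence the product equals $s'/(s'-a)$, which is $(PB+PC+a)/(PB+PC-a)$.

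Summing the three side contributions gives $\pi M_T(P)$ and hence the first formula. The second form follows from $\log\frac{1+x}{1-x}=2\operatorname{artanh}x$ with $x=a/(PB+PC)$, which lies in $(0,1)$ by the triangle inequality. The only care needed is to check that the signed-angle bookkeeping is valid when one of $\beta,\gamma$ is obtuse. The antiderivative is monotone on $(-\frac\pi2,\frac\pi2)$, so the formula holds regardless.
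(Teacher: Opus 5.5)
Your proposal is correct and follows essentially the same route as the paper. You split $\int_0^{2\pi}\rho_P$ by the side hit, integrate $d_a\sec\phi$ between the rays $PB$ and $PC$ to get $d_a\log(\cot\frac\beta2\cot\frac\gamma2)$, and convert this to $\sigma/(\sigma-a)$ with the half-angle formulas. Your signed-angle bookkeeping (limits $\beta-\frac\pi2$ and $\frac\pi2-\gamma$, plus oddness of $\log(\sec\phi+\tan\phi)$) fills in a detail the paper leaves implicit; the relevant fact there is that both limits lie in $(-\frac\pi2,\frac\pi2)$, not that the antiderivative is monotone.
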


\begin{proof}
Consider again the rays from $P$ which meet $BC$.  Put
\[
 \beta=\angle PBC,\qquad \gamma=\angle PCB.
\]
Integrating $d_a\sec\phi$ over the angular interval from the ray $PB$
to the ray $PC$ gives
\[
 d_a\left(\log\cot\frac{\beta}{2}
          +\log\cot\frac{\gamma}{2}\right).
\]
Let
\[
 \sigma=\frac{a+PB+PC}{2}
\]
be the semiperimeter of triangle $PBC$.  The standard half-angle
formulas give
\[
 \cot\frac{\beta}{2}\cot\frac{\gamma}{2}
 =
 \frac{\sigma}{\sigma-a}
 =
 \frac{PB+PC+a}{PB+PC-a}.
\]
Thus the contribution from the rays meeting $BC$ is
\[
 d_a\log\frac{PB+PC+a}{PB+PC-a}.
\]
Summing over the three sides and applying Lemma~\ref{lem:radial}
proves the first formula.  The second follows from
\[
 \log\frac{1+t}{1-t}=2\operatorname{artanh}t.
\]
\end{proof}

\section{Some classical triangle centers}

Theorem~\ref{thm:general} can be specialized to any interior triangle
center.  We record several cases which simplify naturally.

Let $G$ be the centroid, $K$ the symmedian point, $N$ the Nagel point,
$G_e$ the Gergonne point, and $M$ the Mittenpunkt.  As usual, let
$m_a,m_b,m_c$ denote the medians and $h_a,h_b,h_c$ the altitudes.

\subsection{The centroid}

Since
\[
 GB=\frac23m_b,\qquad
 GC=\frac23m_c,\qquad
 d_a(G)=\frac{h_a}{3},
\]
Theorem~\ref{thm:general} gives the following compact expression.

\begin{proposition}\label{prop:centroid}
\[
 {
 M_T(G)=
 \frac1{3\pi}\sum_{\rm cyc}
 h_a\log
 \frac{2m_b+2m_c+3a}
      {2m_b+2m_c-3a}.
 }
\]
\end{proposition}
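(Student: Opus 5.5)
The plan is to apply Theorem~\ref{thm:general} with $P=G$ and substitute the three stated facts about the centroid. First I will justify those facts. The centroid divides each median in the ratio $2:1$ from the vertex, so $GB=\tfrac23 m_b$ and $GC=\tfrac23 m_c$. For the distance to $BC$, I will use either of two routes. One is areal: $G$ has barycentric coordinates $(\tfrac13,\tfrac13,\tfrac13)$, so $[GBC]=\tfrac13[ABC]$, and hence $\tfrac12 a\,d_a(G)=\tfrac13\cdot\tfrac12 a h_a$. The other is to note that the distance to $BC$ varies linearly along the median from $A$, going from $h_a$ at $A$ to $0$ at the midpoint, so at $G$ it equals $h_a/3$. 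Either way $d_a(G)=h_a/3$.

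Next I substitute these values into the first formula of Theorem~\ref{thm:general}. For the $BC$ term this gives
\[
 \frac1\pi\cdot\frac{h_a}{3}\log\frac{\tfrac23 m_b+\tfrac23 m_c+a}{\tfrac23 m_b+\tfrac23 m_c-a}.
\]
Multiplying the numerator and denominator inside the logarithm by $3$ clears the fractions and gives $\frac{2m_b+2m_c+3a}{2m_b+2m_c-3a}$. The factor $\frac13$ comes out in front of every term, and the other two sides are handled by cyclic permutation. Summing the three terms gives the stated expression.

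There is no real obstacle. The only point needing care is the positivity of the denominator inside the logarithm, but this is automatic. Since $G$ lies strictly inside $T$, the points $G$, $B$, $C$ are not collinear, so the strict triangle inequality in triangle $GBC$ gives $GB+GC>a$. Equivalently $2m_b+2m_c>3a$, so the logarithm is well defined and positive, as Theorem~\ref{thm:general} already presupposes.
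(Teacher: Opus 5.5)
Your proposal is correct and follows the paper's argument: substitute $GB=\tfrac23 m_b$, $GC=\tfrac23 m_c$, $d_a(G)=h_a/3$ into Theorem~\ref{thm:general}, then clear the factor $3$ inside the logarithm. Your justifications of these three facts, and of the positivity $2m_b+2m_c>3a$ via the triangle inequality in $GBC$, are correct details that the paper leaves implicit.
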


\subsection{The symmedian point}

Put
\[
 S_2=a^2+b^2+c^2.
\]
The barycentric coordinates of $K$ are
\[
 (a^2:b^2:c^2).
\]
Hence
\[
 d_a(K)=\frac{2\area\,a}{S_2}.
\]
A direct vector calculation also gives
\[
 KA=\frac{2bc\,m_a}{S_2},
\]
and cyclically.

\begin{proposition}\label{prop:symmedian}
\[
 {
 M_T(K)=
 \frac{2\area}{\pi S_2}
 \sum_{\rm cyc}
 a\log
 \frac{S_2+2(cm_b+bm_c)}
      {2(cm_b+bm_c)-S_2}.
 }
\]
\end{proposition}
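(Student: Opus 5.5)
Proposition~\ref{prop:symmedian} follows directly from Theorem~\ref{thm:general} once we substitute the two facts about $K$ recorded just before it. The plan is first to check those facts briefly, then to substitute and simplify.

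\emph{Distances to the sides.} With barycentrics $(a^2:b^2:c^2)$, the normalized coordinate of $K$ relative to $BC$ is $a^2/S_2$. This coordinate equals the ratio of the area of $KBC$ to $\area$, so
\[
 \tfrac12 a\,d_a(K)=\frac{a^2}{S_2}\area,
 \qquad\text{hence}\qquad
 d_a(K)=\frac{2\area\,a}{S_2}.
\]

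\emph{Distances to the vertices.} Write
\[
 K=\frac{a^2A+b^2B+c^2C}{S_2},
\]
so that
\[
 K-A=\frac{b^2(B-A)+c^2(C-A)}{S_2}.
\]
Taking the squared norm gives
\[
 S_2^2\,KA^2=b^4c^2+c^4b^2+2b^2c^2\,(B-A)\cdot(C-A).
\]
Use $2(B-A)\cdot(C-A)=b^2+c^2-a^2$. Then
\[
 S_2^2\,KA^2=b^2c^2\bigl(2b^2+2c^2-a^2\bigr)=4b^2c^2m_a^2,
\]
which confirms $KA=2bc\,m_a/S_2$ and, cyclically, $KB=2ca\,m_b/S_2$ and $KC=2ab\,m_c/S_2$.

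\emph{Substitution.} The term of Theorem~\ref{thm:general} attached to side $BC$ uses
\[
 KB+KC=\frac{2a(c\,m_b+b\,m_c)}{S_2}.
\]
Hence
\[
 \frac{KB+KC+a}{KB+KC-a}
 =\frac{2a(cm_b+bm_c)+aS_2}{2a(cm_b+bm_c)-aS_2}.
\]
The common factor $a$ cancels. Multiplying by $d_a(K)=2\area a/S_2$, dividing by $\pi$, and summing cyclically gives exactly the stated formula.

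The main obstacle is the median computation for $KA$; the remaining steps are bookkeeping. Two sign issues are settled by Theorem~\ref{thm:general} itself. First, the denominator $2(cm_b+bm_c)-S_2$ is positive because $KB+KC>a$ by the triangle inequality, $K$ being interior. Second, the order of the logarithm's arguments is inherited directly from the theorem.
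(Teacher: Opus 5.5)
Your proof is correct and follows the same route as the paper: you substitute $d_a(K)=2\area a/S_2$ and $KA=2bc\,m_a/S_2$ (and its cyclic versions) into Theorem~\ref{thm:general}, then cancel the common factor $a$. You also carry out the vector computation of $KA$, which the paper only asserts, and that computation is correct.
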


The denominator in each logarithm is positive.  For example,
\[
 2(cm_b+bm_c)>S_2,
\]
which is just the triangle inequality $KB+KC>a$ after multiplying
by $S_2/a$.

\subsection{Nagel, Gergonne, and Mittenpunkt}

It is convenient to put
\[
 x=s-a,\qquad y=s-b,\qquad z=s-c.
\]
The following table lists barycentric coordinates and the distance to
$BC$.  The other sideline distances are obtained cyclically.

\begin{center}
\begin{tabular}{lll}
\toprule
Point & Barycentric coordinates & $d_a$\\
\midrule
Incenter $I$ & $(a:b:c)$ & $r$\\[1mm]
Centroid $G$ & $(1:1:1)$ & $h_a/3$\\[1mm]
Symmedian point $K$ & $(a^2:b^2:c^2)$
 & $\dfrac{2\area a}{a^2+b^2+c^2}$\\[3mm]
Nagel point $N$ & $(x:y:z)$
 & $\dfrac{2rx}{a}$\\[3mm]
Gergonne point $G_e$ & $(yz:zx:xy)$
 & $\dfrac{2s\,yz}{a(4R+r)}$\\[3mm]
Mittenpunkt $M$ & $(ax:by:cz)$
 & $\dfrac{s x}{4R+r}$\\
\bottomrule
\end{tabular}
\end{center}

Thus Theorem~\ref{thm:general} immediately gives explicit formulas
for the average chord through each of these points.  For example, the
Mittenpunkt satisfies
\[
 {
 M_T(M)=
 \frac{s}{\pi(4R+r)}
 \sum_{\rm cyc}
 (s-a)
 \log\frac{MB+MC+a}{MB+MC-a}.
 }
\]
The table is often more useful than expanding the remaining vertex
distances into radicals.

\subsection{A $3$--$4$--$5$ example}

For a concrete comparison, the following are the average chord
lengths through these six familiar centers in a $3$--$4$--$5$
triangle.  The values are obtained directly from
Theorem~\ref{thm:general}.

\begin{center}
\begin{tabular}{lc}
\toprule
Point & Average chord length\\
\midrule
Centroid $G$ & $2.63841$\\
Incenter $I$ & $2.63781$\\
Symmedian point $K$ & $2.57012$\\
Mittenpunkt $M$ & $2.55819$\\
Gergonne point $G_e$ & $2.55035$\\
Nagel point $N$ & $2.42950$\\
\bottomrule
\end{tabular}
\end{center}

The centroid and incenter values are remarkably close, although the
two points are distinct.  By comparison, the sharp bound proved in
the next section gives $2.90431\ldots$ for an equilateral triangle
with the same perimeter $12$.

\section{A sharp bound for every interior point}

We now prove the main result.  Remarkably, the same sharp bound holds
for every interior point, not merely for a distinguished center.

\begin{theorem}\label{thm:main}
Let $T$ be a triangle of perimeter $p$, and let $P$ be any interior
point of $T$.  Then
\[
 {
 M_T(P)\le
 \frac{p}{\pi\sqrt3}\log(2+\sqrt3).
 }
\]
Equality holds if and only if $T$ is equilateral and $P$ is its
center.
\end{theorem}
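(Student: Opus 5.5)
The plan is to work with the potential representation $\pi M_T(P)=\int_T |X-P|^{-1}\,dA(X)$ from Lemma~\ref{lem:radial}. The ratio $M_T(P)/p$ is invariant under similarities, and we will show that it is maximized only by the equilateral triangle with $P$ at its center. The tool is Steiner symmetrization with respect to a line through $P$ perpendicular to one side. This operation does not decrease $M$, does not increase $p$, and keeps $P$ inside the triangle.

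\emph{Step 1: the symmetrization step.} Fix the side $BC$ and let $L$ be the line through $P$ perpendicular to $BC$. Slice $T$ by lines parallel to $BC$; each slice is a segment. Replace each slice by the segment of the same length centered on $L$. The result $T'$ is the isosceles triangle with base $a$ on line $BC$, the same height $h_a$, and apex on $L$. Therefore $T'$ has the same area, and $P\in T'$ because $P$ lies on $L$ at a height below the apex. On a slice at signed distance $t$ from $P$, the integrand $u\mapsto (u^2+t^2)^{-1/2}$ is symmetric about $u=0$ and strictly decreasing in $|u|$. Hence, among segments of a given length, the centered one maximizes the slice integral, strictly unless the segment was already centered. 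Integrating over $t$ gives $M_{T'}(P)\ge M_T(P)$. For the perimeter, with base and height fixed, $b+c$ is minimized exactly when the apex lies on the perpendicular bisector of $BC$. This is the reflection argument behind Heron's shortest-path problem. So $p(T')\le p(T)$, with equality iff $T$ is already isosceles with $AB=AC$.

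\emph{Step 2: existence of a maximizer and the symmetry conclusion.} Normalize $p=3$ and consider $F(T,P)=M_T(P)/p$ over pairs with $P\in T$, allowing $P$ on the boundary. Also allow degenerate triangles, for which the area, and with it the integral, tends to $0$. The integral $\int_T |X-P|^{-1}\,dA$ depends continuously on $(T,P)$, since the singularity $1/|X-P|$ is locally integrable in the plane. The normalized configuration space is compact, so $F$ attains its maximum. The maximum is positive, so it is attained at a nondegenerate triangle. Suppose a maximizing pair is not isosceles with respect to the perpendicular from $P$ to some side. Then by Step 1 the symmetrized triangle has strictly smaller perimeter and no smaller $M$, so after rescaling to $p=3$ it beats the maximizer, a contradiction. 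Hence every maximizer has the following property: for each side, the foot of the perpendicular from $P$ is the midpoint of that side. So $P$ is the circumcenter, and the triangle is symmetric under reflection in each of its three perpendicular bisectors through $P$, which forces $AB=AC$, $BA=BC$, and $CA=CB$. Thus the triangle is equilateral and $P$ is its center.

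\emph{Step 3: evaluation and equality.} For the equilateral triangle of side $a=p/3$, the value computed after the proposition on the incenter gives $M=\frac{\sqrt3\,a}{\pi}\log(2+\sqrt3)=\frac{p}{\pi\sqrt3}\log(2+\sqrt3)$. By Step 2 this is the maximum, which proves the inequality. For the equality case, let $(T,P)$ be any pair other than the equilateral triangle with its center. Step 2 shows that it is not a maximizer, so the inequality is strict. The step I expect to need the most care is the compactness and continuity argument in Step 2: one must treat $P$ on or near the boundary, and show that degenerate triangles cannot be maximizers. The strictness bookkeeping in Step 1 is also delicate: the strict decrease of perimeter suffices, so strictness of the rearrangement inequality is not needed.
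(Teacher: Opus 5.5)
Your route is the paper's: Steiner symmetrization about the line $L$ through $P$ perpendicular to a side, the fixed-base-and-height perimeter comparison, compactness, and rescaling. However, Step~2 has a real gap. You write that if a maximizer is not symmetric about the perpendicular from $P$ to some side, then the symmetrized triangle has \emph{strictly smaller perimeter}. By your own Step~1, $p(T')=p(T)$ exactly when $AB=AC$, which is a condition on the triangle alone. Suppose $AB=AC$ but $P$ is off the axis of symmetry. Then $T$ is not symmetric about $L$, yet $T'$ is just the horizontal translate of $T$ that moves its axis onto $L$, so the perimeter does not drop at all. The perimeter argument by itself therefore only shows that a maximizing triangle has $AB=AC$, $BA=BC$, $CA=CB$, i.e.\ is equilateral. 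It says nothing about where $P$ sits. This affects the inequality itself, not just the equality case. Step~3 evaluates $M$ only at the center, and nothing you have proved rules out an off-center point of the equilateral triangle giving a larger value. Your closing remark that strictness of the rearrangement inequality is not needed is therefore backwards.

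The fix is to use the strict rearrangement you already stated in Step~1, as the paper does. At a maximizer, rescaling forces both $p(T')=p(T)$ and $M_{T'}(P)=M_T(P)$. Equality in the integral must then force $T$ to be symmetric about $L$. For this you need strict inequality on a set of heights of positive measure, not just on some slice. The midpoints of the slices parallel to $BC$ depend affinely on the height, so if $T$ is not symmetric about $L$ they lie on $L$ for at most one height. Every other slice of positive length (at $t\neq0$) then gives a strictly larger centered integral. In the case above, this says that moving $P$ horizontally onto the axis of an isosceles triangle strictly increases $M$. Applying this to two sides makes the triangle isosceles with respect to both and puts $P$ on both axes, hence at the center of an equilateral triangle. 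A smaller point: your claim that the integral tends to $0$ along degenerating triangles needs a line of proof. For instance, Cauchy--Schwarz on the radial function gives $\int_T|X-P|^{-1}\,dA\le 2\sqrt{\pi\area}$ (Lemma~\ref{lem:area-bound}).
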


The proof uses only an elementary form of Steiner symmetrization.  We
include the details.

\begin{lemma}\label{lem:symm}
Fix a side $BC$ of a triangle $T$ and a point $P\in T$.  Replace
each segment of $T$ parallel to $BC$ by a segment of the same length
whose midpoint lies on the line through $P$ perpendicular to $BC$.
The resulting set $T^*$ is an isosceles triangle with the same base
length and altitude as $T$, and $P\in T^*$.  Moreover,
\[
 \int_{T^*}\frac{dA(X)}{|X-P|}
 \ge
 \int_T\frac{dA(X)}{|X-P|},
\]
and
\[
 p(T^*)\le p(T).
\]
Equality in the first inequality holds only when $T$ is already
symmetric about the line through $P$ perpendicular to $BC$.
\end{lemma}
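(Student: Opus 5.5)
We set up coordinates with $BC$ on the $x$-axis, $T$ in the upper half-plane, and $P=(0,y_0)$ with $0<y_0<h$, where $h$ is the altitude to $BC$. For $0\le y\le h$ the horizontal slice $T\cap\{Y=y\}$ is a segment $[u(y),v(y)]$ of length $L(y)=v(y)-u(y)=a(1-y/h)$, since its endpoints move linearly in $y$. By construction the slice of $T^*$ at height $y$ is $[-L(y)/2,\,L(y)/2]$. These endpoints are linear in $y$, starting at $\pm a/2$ when $y=0$ and ending at $0$ when $y=h$. Hence $T^*$ is the isosceles triangle with base $a$, altitude $h$ and apex $(0,h)$. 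Because $0<y_0<h$, the point $P$ lies on the axis strictly inside $T^*$.

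For the integral inequality we apply Fubini in horizontal slices:
\[
 \int_T\frac{dA}{|X-P|}=\int_0^h F_y\bigl([u(y),v(y)]\bigr)\,dy,\qquad
 F_y(I)=\int_I\frac{dx}{\sqrt{x^2+(y-y_0)^2}}.
\]
We then use the one-dimensional rearrangement fact. Let $g(x)=(x^2+k^2)^{-1/2}$. It is even and strictly decreasing in $|x|$, so among all intervals of a fixed length $L$ the integral of $g$ is largest exactly for the interval centred at $0$. To prove this, set $\Phi(t)=\int_t^{t+L}g$. Then $\Phi'(t)=g(t+L)-g(t)$, which is positive when $t<-L/2$ and negative when $t>-L/2$. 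Therefore $F_y(T\text{-slice})\le F_y(T^*\text{-slice})$ for every $y$, with strict inequality unless $u(y)=-v(y)$. The case $y=y_0$ gives a log-singular integral, but it is a single slice of measure zero and causes no trouble. If $T$ is not symmetric about the axis, then $u+v\ne0$ on all but at most one $y$, because $u+v$ is linear in $y$. Strict inequality then holds on a set of positive measure, and the equality statement follows.

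For the perimeter inequality, both triangles have base $a$ and apex at height $h$. For $T$ the apex is at horizontal position $t$, and for $T^*$ it is at $0$. So
\[
 p(T)=a+\sqrt{(t+a/2)^2+h^2}+\sqrt{(t-a/2)^2+h^2}.
\]
This expression is an even, strictly convex function of $t$, so it is minimized at $t=0$. Geometrically this is just the reflection argument: the apex moves along a line parallel to $BC$, and the sum of distances to $B$ and $C$ is least at the symmetric position. That gives $p(T^*)\le p(T)$.

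The main obstacle is only bookkeeping. We must check that $T^*$ really is a triangle, which follows from the linearity of the slice endpoints, and we must make the equality case rigorous. The linearity of $u+v$ handles the equality case cleanly.
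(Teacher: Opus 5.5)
Your proposal is correct and follows essentially the same route as the paper. You apply Fubini over slices parallel to $BC$ and use the one-dimensional fact that an even kernel decreasing in $|x|$ integrates most over the centred interval; you prove this fact via $\Phi'(t)=g(t+L)-g(t)$, whereas the paper simply asserts it. Like the paper, you use the affinity of the slice midpoints for the equality case and the strict convexity of the sum of the two lateral sides for the perimeter. The one small restriction is that you assume $0<y_0<h$, while the lemma allows any $P\in T$, including $P$ on line $BC$ or at the apex, as needed when boundary points are admitted in the proof of Theorem~\ref{thm:main}; your argument goes through verbatim in those cases.
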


\begin{proof}
Take $BC$ horizontal and write $P=(0,t)$.  At height $y$, the
horizontal section of $T$ is an interval $I_y$ of some length
$\ell(y)$.  The corresponding section of $T^*$ is the centered
interval
\[
 I_y^*=\left[-\frac{\ell(y)}2,\frac{\ell(y)}2\right].
\]
Since the section at the height of $P$ is centered at $P$, we have
$P\in T^*$.  The section lengths are unchanged, so $T^*$ has the same
base length and altitude as $T$.

For fixed $y\ne t$, the function
\[
 x\longmapsto\frac1{\sqrt{x^2+(y-t)^2}}
\]
is even and strictly decreasing as $|x|$ increases.  Hence its
integral over an interval of prescribed length is largest when the
interval is centered at the origin.  Integrating over $y$ gives the first inequality.  The exceptional
level $y=t$ has measure zero.  If desired, the harmless singularity
there can be avoided by first replacing the kernel by
\[
 \frac{1}{\sqrt{x^2+(y-t)^2+\varepsilon^2}}
\]
and then letting $\varepsilon\downarrow0$.  The same argument applies
when $P$ lies on the boundary.

If equality holds, then for almost every $y$ the midpoint of $I_y$
must lie on $x=0$.  The midpoints of the parallel sections of a
triangle vary affinely with $y$, so the original triangle must itself
be symmetric about $x=0$.

For the perimeter assertion, let $a=BC$ and let $h$ be the altitude
from the opposite vertex.  If $\delta$ is the horizontal displacement
of that vertex from the midpoint of $BC$, the sum of the two
non-base sides is
\[
 f(\delta)=
 \sqrt{h^2+\left(\delta+\frac a2\right)^2}
 +
 \sqrt{h^2+\left(\delta-\frac a2\right)^2}.
\]
The function $f$ is even and strictly convex, and therefore has its
minimum at $\delta=0$.  The symmetrized triangle has $\delta=0$,
which proves $p(T^*)\le p(T)$, with equality if and only if
$AB=AC$.  Notice that this perimeter comparison depends only on the
triangle; it is independent of the location of $P$.
\end{proof}

We also need a simple compactness observation.

\begin{lemma}\label{lem:area-bound}
For every triangle $T$ of area $\area$ and every point $P\in T$,
\[
 \int_T\frac{dA(X)}{|X-P|}
 \le 2\sqrt{\pi\area}.
\]
\end{lemma}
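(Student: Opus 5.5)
We prove Lemma~\ref{lem:area-bound} by the same rearrangement idea used in Lemma~\ref{lem:symm}, now applied radially around $P$ rather than along parallel lines. The kernel $X\mapsto 1/|X-P|$ is radially symmetric about $P$ and strictly decreasing in $|X-P|$. Among all measurable sets of prescribed area $\area$, the integral of such a kernel is therefore largest for the disk $D$ centered at $P$ with the same area, that is, radius $R_0=\sqrt{\area/\pi}$.

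The comparison is the bathtub argument. Write $T=(T\cap D)\cup(T\setminus D)$ and $D=(T\cap D)\cup(D\setminus T)$. Since $|T|=|D|$, the sets $T\setminus D$ and $D\setminus T$ have equal area. On $T\setminus D$ we have $1/|X-P|\le 1/R_0$, and on $D\setminus T$ we have $1/|X-P|\ge 1/R_0$. Hence
\[
 \int_{T\setminus D}\frac{dA}{|X-P|}\le \frac{|T\setminus D|}{R_0}
 =\frac{|D\setminus T|}{R_0}\le \int_{D\setminus T}\frac{dA}{|X-P|}.
\]
Adding the common part $T\cap D$ gives
\[
 \int_T\frac{dA(X)}{|X-P|}\le\int_D\frac{dA(X)}{|X-P|}.
\]

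It remains to evaluate the right-hand side in polar coordinates about $P$, exactly as in the proof of Lemma~\ref{lem:radial}:
\[
 \int_0^{2\pi}\!\!\int_0^{R_0}\frac1r\,r\,dr\,d\theta=2\pi R_0=2\pi\sqrt{\area/\pi}=2\sqrt{\pi\area}.
\]
This is the claimed bound. The singularity at $P$ is integrable, so all the integrals above are finite. If one wants to be careful, one can again truncate the kernel at $\varepsilon$ and let $\varepsilon\downarrow0$. Nothing in the argument uses that $T$ is a triangle or that $P$ lies in its interior, so points of $P$ on the boundary are covered as well.

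No step is a real obstacle. The only point needing care is justifying the rearrangement comparison, and the set-difference argument above handles it in one line. Equivalently, one can note from Lemma~\ref{lem:radial} that the quantity equals $\int_0^{2\pi}\rho_P\,d\theta$ with $\frac12\int_0^{2\pi}\rho_P^2\,d\theta=\area$, and apply Cauchy--Schwarz:
\[
 \int_0^{2\pi}\rho_P\,d\theta\le\sqrt{2\pi}\,\sqrt{2\area}=2\sqrt{\pi\area}.
\]
This route is shorter, and it is the one I would write.
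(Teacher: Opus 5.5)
Your proposal is correct, and the Cauchy--Schwarz route you say you would write is exactly the paper's proof: it writes the integral as $\int_0^{2\pi}\rho_P\,d\theta$ and uses $\int_0^{2\pi}\rho_P^2\,d\theta=2\area$. Your bathtub comparison with the disk of area $\area$ centered at $P$ is also valid, and it has the minor advantage of handling $P$ on the boundary directly rather than through Lemma~\ref{lem:radial}, which is stated only for interior points.
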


\begin{proof}
By Lemma~\ref{lem:radial},
\[
 V:=\int_T\frac{dA(X)}{|X-P|}
 =\int_0^{2\pi}\rho_P(\theta)\,d\theta,
\]
while
\[
 2\area=\int_0^{2\pi}\rho_P(\theta)^2\,d\theta.
\]
Cauchy--Schwarz therefore gives
\[
 V^2
 \le
 2\pi\int_0^{2\pi}\rho_P(\theta)^2\,d\theta
 =
 4\pi\area.
\]
\end{proof}

\begin{proof}[Proof of Theorem~\ref{thm:main}]
Because both perimeter and $M_T(P)$ scale linearly, fix the perimeter
$p$.  It is convenient here to allow $P$ temporarily to lie on the
boundary of $T$; the integral
\[
 V(T,P)=\int_T\frac{dA(X)}{|X-P|}
\]
is still finite.

A maximizing pair $(T,P)$ exists.  Indeed, after translating $P$ to
the origin, all vertices lie in a fixed bounded disk.  A maximizing
sequence therefore has a convergent subsequence.  A degenerate
limiting triangle has area tending to zero, and
Lemma~\ref{lem:area-bound} then forces $V(T,P)\to0$, so a maximizer
cannot be degenerate.  For a nondegenerate limit, continuity follows
by translating $P$ to the origin: the indicator functions of the
triangles converge almost everywhere in a common bounded disk, while
$1/|X|$ is locally integrable in the plane.  Thus the maximum is
attained on a nondegenerate triangle.  The argument below will in
fact force the maximizing point to be the interior center of an
equilateral triangle.

Choose any side $BC$ and form the triangle $T^*$ of
Lemma~\ref{lem:symm}.  Then
\[
 V(T^*,P)\ge V(T,P),
 \qquad
 p(T^*)\le p.
\]
Set
\[
 \lambda=\frac{p}{p(T^*)}\ge1
\]
and enlarge $T^*$ by a homothety centered at $P$ with factor
$\lambda$.  Since
\[
 V(\lambda T^*,P)=\lambda V(T^*,P)
 \ge \lambda V(T,P),
\]
maximality forces $\lambda=1$ and
\[
 V(T^*,P)=V(T,P).
\]
Thus equality holds throughout Lemma~\ref{lem:symm}.  The perimeter
equality gives $AB=AC$, while equality in the integral shows that
$T$ is symmetric about the line through $P$ perpendicular to $BC$;
in particular, $P$ lies on the perpendicular bisector of $BC$.

Apply the same argument to a second side.  The triangle is then
isosceles with respect to two different sides, and hence equilateral.
The two symmetry axes meet at its center, so $P$ is that center.

It remains only to evaluate $M_T(P)$ for an equilateral triangle.
Let its side length be $a=p/3$.  The distance from its center to a
side is
\[
 d=\frac{a\sqrt3}{6}=\frac{p}{6\sqrt3}.
\]
For the rays meeting one side,
\[
 \rho(\theta)=d\sec\theta,
 \qquad -\frac{\pi}{3}\le\theta\le\frac{\pi}{3}.
\]
Thus
\[
 \int_0^{2\pi}\rho(\theta)\,d\theta
 =
 3d\int_{-\pi/3}^{\pi/3}\sec\theta\,d\theta
 =
 6d\log(2+\sqrt3)
 =
 \frac{p}{\sqrt3}\log(2+\sqrt3).
\]
Division by $\pi$ completes the proof.
\end{proof}

\begin{corollary}
For every triangle of perimeter $p$, the average chord through each
of the points
\[
 I,\quad G,\quad K,\quad G_e,\quad N,\quad M
\]
is at most
\[
 \frac{p}{\pi\sqrt3}\log(2+\sqrt3),
\]
with equality only for an equilateral triangle.
\end{corollary}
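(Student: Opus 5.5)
The corollary should follow directly from Theorem~\ref{thm:main}, applied with $P$ equal to each of the listed centers in turn. The only things to check are that each point is an \emph{interior} point of $T$, and that the equality case forces the triangle to be equilateral.

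Interiority comes from the barycentric coordinates in the table of Section~4. A point with barycentric coordinates $(u:v:w)$ lies strictly inside $T$ exactly when $u,v,w$ are all positive. The coordinates listed are $(a:b:c)$, $(1:1:1)$, $(a^2:b^2:c^2)$, $(x:y:z)$, $(yz:zx:xy)$ and $(ax:by:cz)$, where $x=s-a$, $y=s-b$, $z=s-c$. For a nondegenerate triangle the triangle inequality gives $x,y,z>0$, so every coordinate is positive and each of $I,G,K,N,G_e,M$ is interior. Theorem~\ref{thm:main} then gives the stated inequality for each of them.

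For the equality case, Theorem~\ref{thm:main} says equality forces $T$ to be equilateral with $P$ at its center. So equality at any of these six points forces $T$ to be equilateral. Conversely, in an equilateral triangle all six points coincide with the center: substituting $a=b=c$ and $x=y=z$ makes every coordinate triple proportional to $(1:1:1)$. Equality is therefore attained at each of them.

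No step here is a real obstacle. The only point needing care is the strict positivity of $s-a$, $s-b$, $s-c$, which is what guarantees that $N$, $G_e$ and $M$ are interior.
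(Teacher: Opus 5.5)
Your proposal is correct and matches the paper, which gives no separate proof and treats the corollary as an immediate consequence of Theorem~\ref{thm:main} applied with $P$ equal to each center. Your checks fill in the details the paper leaves implicit: the barycentric coordinates are strictly positive (using $s-a,s-b,s-c>0$), so each center is interior, and all six centers coincide with the center when $a=b=c$, so equality is attained there.
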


\begin{remark}
For the incenter alone, the sharp inequality also has a short direct
proof.  Put
\[
 x=\frac A2,\qquad y=\frac B2,\qquad z=\frac C2,
 \qquad x+y+z=\frac{\pi}{2}.
\]
Since
\[
 s=r(\cot x+\cot y+\cot z),
\]
the incenter formula reduces the assertion to
\[
 \frac{\sum\log\cot(x/2)}{\sum\cot x}
 \le
 \frac{\log(2+\sqrt3)}{\sqrt3}.
\]
This follows immediately from Jensen's inequality applied to
\[
 h(x)=
 \log\cot\frac x2-c\cot x,
 \qquad
 c=\frac{\log(2+\sqrt3)}{\sqrt3}.
\]
Indeed,
\[
 h''(x)=
 \csc x\,\cot x\bigl(1-2c\csc x\bigr)<0
 \qquad (0<x<\pi/2),
\]
because $2c>1$ and $\csc x\ge1$.  Thus $h$ is strictly concave, and
$h(\pi/6)=0$.
\end{remark}

\section{The point of maximum average chord}

For a fixed triangle, it is natural to ask which point $P$ maximizes
$M_T(P)$.  By Lemma~\ref{lem:radial}, this is exactly the problem of
maximizing
\[
 \int_T\frac{dA(X)}{|X-P|}.
\]
Abraham and Kova\v{c} \cite{AbrahamKovac} studied this point as the
center of maximal electrostatic potential of a uniformly charged
triangle.  It is catalogued as $X(5626)$ in Kimberling's
\emph{Encyclopedia of Triangle Centers} \cite{KimberlingETC}; see
also Nicollier \cite{Nicollier}.  In the more general language of convex
geometry, it is the radial center of order $1$; radial centers were
introduced and studied by Moszy\'nska \cite{Moszynska} and others.

If $P$ is the maximizing point, Abraham and Kova\v{c} obtained the
equivalent characterization
\[
 {
 \frac1a\log\frac{PB+PC+a}{PB+PC-a}
 =
 \frac1b\log\frac{PC+PA+b}{PC+PA-b}
 =
 \frac1c\log\frac{PA+PB+c}{PA+PB-c}.
 }
\]
Thus the radial center has a particularly simple interpretation in
the present setting: it is the point through which the average chord
of the triangle is longest.

\section{A connection with dual quermassintegrals}

We finish by placing the preceding elementary problem in a broader
context.  If a star body $K\subset\mathbb R^2$ has radial function
$\rho_K$, one standard normalization of the dual quermassintegrals is
\[
 \widetilde W_i(K)
 =
 \frac12\int_0^{2\pi}\rho_K(\theta)^{\,2-i}\,d\theta.
\]
These quantities belong to the dual Brunn--Minkowski theory developed
by Lutwak; see \cite{Lutwak}.

Taking $K=T-P$ and $i=1$ gives
\[
 {
 M_T(P)=\frac{2}{\pi}\widetilde W_1(T-P).
 }
\]
Thus the average chord considered here is, up to normalization, the
first dual quermassintegral of the triangle translated so that $P$ is
the origin.

The elementary symmetrization argument used above also extends to
other radial powers.  For example, if $G$ is the centroid and
$0<q\le2$, the quantity
\[
 \int_0^{2\pi}\rho_G(\theta)^q\,d\theta
\]
is maximized, among triangles of fixed perimeter, by the equilateral
triangle.  Indeed,
\[
 \int_0^{2\pi}\rho_G(\theta)^q\,d\theta
 =
 q\int_T |X-G|^{q-2}\,dA(X),
\]
and the kernel on the right is radially nonincreasing.  Steiner
symmetrization preserves the centroid, since it preserves the lengths
of all parallel sections and moves their midpoints to the symmetry
axis.  Degenerate triangles cause no difficulty, since H\"older's
inequality gives
\[
 \int_0^{2\pi}\rho_G^q\,d\theta
 \le
 (2\pi)^{1-q/2}(2\area)^{q/2}.
\]
The case $q=1$ is the average-chord result, while $q=2$ is equivalent
to the familiar maximal-area property of the equilateral triangle.
We do not pursue these extensions here.

\section*{Acknowledgment}

The author gratefully acknowledges the assistance of ChatGPT, developed by
OpenAI, in exploring the problem, checking calculations, conducting numerical
experiments, reviewing related literature, and improving the exposition.
The author is responsible for the final mathematical statements and presentation.

\end{document}